\newcommand\g{{\mathfrak g}}
\newcommand\m{\mathfrak m}
\newcommand\z{\mathfrak z}
\newcommand\q{\mathfrak q}
\newcommand\W{{\bf A}}
\newcommand\K{\mathbb K}
\newcommand\U{\mathcal U}
\newcommand\Id{\mathfrak{Id}}
\newcommand\Walg{\mathcal W}
\newcommand\A{\mathcal A}
\newcommand\gr{\operatorname{gr}}
\newcommand\Orb{\mathbb{O}}
\newcommand\I{\mathcal I}
\newcommand\J{\mathcal J}
\renewcommand\sl{\mathfrak{sl}}
\newcommand{\ad}{\mathop{\rm ad}\nolimits}
\newcommand\Centr{\mathcal Z}
\newcommand{\VA}{\operatorname{V}}
\newcommand\Prim{\operatorname{Pr}}
\newcommand\Leaf{\mathcal{L}}
\newtheorem{Thm}{Theorem}[section]
\newtheorem{Prop}[Thm]{Proposition}
\newtheorem{Lem}[Thm]{Lemma}
\theoremstyle{definition}
\newtheorem{Rem}[Thm]{Remark}
\numberwithin{equation}{section}
\numberwithin{table}{section} \oddsidemargin=0cm
\author{Ivan Losev}
\title{Primitive ideals for W-algebras in type A}
\thanks{Supported by the NSF grant DMS-0900907}
\thanks{MSC 2010: Primary 16S99, 17B35}
\thanks{Address: Northeastern University, Department of Mathematics, 360 Huntington Avenue, Boston, MA 02115}
\thanks{E-mail: i.loseu@neu.edu}
\begin{document}
\begin{abstract}
In this note we classify the primitive ideals in finite W-algebras
of type $A$.
\end{abstract}
\maketitle
\section{Introduction}
Let $\g$ be a semisimple Lie algebra over an algebraically closed
 field $\K$ of characteristic 0 and $e\in \g$ be a nilpotent element. Then to the pair
$(\g,e)$ one can assign an associative algebra $\Walg$ called the W-algebra.
This algebra was defined in full generality by Premet in \cite{Premet1}.
Two equivalent definitions of W-algebras are provided in Section \ref{SECTION_W}. For other details
on W-algebras the reader is referred to review \cite{ICM}.

One of the reasons to be interested in $\Walg$ are numerous connections between this algebra
and the universal enveloping algebra $\U$ of $\g$. For example, the sets of primitive ideals $\Prim(\Walg)$
and $\Prim(\U)$ of $\Walg$ and $\U$ are closely related. Recall that an ideal in an associative algebra
is called primitive if it is the annihilator of some irreducible module. The structure of
$\Prim(\U)$ was studied extensively in 70's and 80's.

One of manifestations of a relationship between $\Prim(\Walg)$ and $\Prim(\U)$ is a map
$\I\mapsto \I^\dagger:\Prim(\Walg)\rightarrow \Prim(\U)$ constructed in \cite{Wquant}.
One can describe the image of this map. Namely, to each primitive ideal $\J\in \Prim(\U)$
one  assigns its associated variety $\VA(\U/\J)$. According to a theorem of Joseph,  the associated variety is 
the closure of a single nilpotent orbit in $\g^*\cong \g$. Thanks to \cite{Wquant}, Theorem 1.2.2(vii), an element
$\J\in \Prim(\U)$ is of the form $\I^\dagger$ for some $\I\in \Prim(\Walg)$ if and only if $\Orb\subset \VA(\U/\J)$,
where $\Orb$ stands for the adjoint orbit of $e$.

In general, the map $\bullet^\dagger$ is not injective. However, the following result holds.

\begin{Thm}\label{Thm:main}
The map $\bullet^\dagger:\Prim(\Walg)\rightarrow \Prim(\U)$ is an injection provided
$\g\cong \sl_n$.
\end{Thm}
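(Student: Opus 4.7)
The strategy is to invoke a general structural result on the fibers of $\bullet^\dagger$ from earlier work of the author, and then show that these fibers collapse to singletons when $\g\cong \sl_n$. Specifically, I would appeal to the theorem (a companion to the cited result from \cite{Wquant}) identifying the fibers of $\bullet^\dagger:\Prim(\Walg)\to \Prim(\U)$ with the orbits of a natural action of the component group $Q:=Z_G(e)/Z_G(e)^\circ$ on $\Prim(\Walg)$, where $G$ denotes the adjoint group of $\g$. Theorem~\ref{Thm:main} thus reduces to showing that $Q$ acts trivially on $\Prim(\Walg)$ in type $A$.

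Next I would pass from $\sl_n$ to $\mathfrak{gl}_n$. Since $e$ also lies in $\mathfrak{gl}_n$, there is a natural identification $\Walg(\mathfrak{gl}_n,e)\cong \Walg(\sl_n,e)\otimes \K[z]$, where $z$ corresponds to the one-dimensional center of $\mathfrak{gl}_n$; an analogous decomposition holds for $\U(\mathfrak{gl}_n)$. The map $\bullet^\dagger$ respects both decompositions, so injectivity for $\sl_n$ would follow from injectivity for $\mathfrak{gl}_n$ at each fixed central character. For $\mathfrak{gl}_n$, the centralizer $Z_{GL_n}(e)$ is classically known to be connected (a product of general linear groups indexed by Jordan blocks), so $Q$ is trivial in that setting and the fiber theorem immediately yields injectivity.

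The main obstacle is reconciling the component groups on the two sides. For the adjoint group $PGL_n$ the group $Q$ may be nontrivial (cyclic, of order dividing the gcd of the Jordan block sizes of $e$), and one must verify that the tensor decomposition above is $Q$-equivariant in the appropriate sense, forcing the $Q$-action on $\Prim(\Walg(\sl_n,e))$ to match the trivial action in the $\mathfrak{gl}_n$ setting. This compatibility should follow from a careful examination of how central characters interact with the Skryabin equivalence underlying $\bullet^\dagger$, together with the observation that primitive ideals of $\Walg(\sl_n,e)$ at a given central character are in bijection with primitive ideals of $\Walg(\mathfrak{gl}_n,e)$ at a matching central character. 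If this step proves delicate, a backup plan would be to describe $\Prim(\Walg(\sl_n,e))$ combinatorially via the Brundan--Kleshchev Yangian presentation and verify the triviality of the $Q$-action directly on the combinatorial data.
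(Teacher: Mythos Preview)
Your proposal rests on a ``fiber theorem'' asserting that the fibers of $\bullet^\dagger:\Prim(\Walg)\to\Prim(\U)$ are exactly the orbits of the component group $Q$ acting on $\Prim(\Walg)$. No such result is available in the generality you need. The theorem you have in mind, proved in \cite{HC}, concerns only primitive ideals of \emph{finite codimension} in $\Walg$---equivalently, the stratum $\Prim_\chi(\Walg_\lambda)$---and identifies the fibers of $\bullet^\dagger$ over $\Prim_{\Orb}(\U_\lambda)$ with $Q$-orbits. That is precisely Proposition~\ref{Prop:map_summary}(3) of the present paper, and it is indeed an ingredient in the proof. But $\Prim(\Walg)$ also contains primitive ideals whose associated variety is a larger leaf $\widetilde{\Orb}\cap S$, and for those no component-group description of the fibers is known a priori. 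Your reduction to $\mathfrak{gl}_n$ and the Yangian combinatorics therefore addresses a problem that is not the obstacle; even with a trivial component group, you have no mechanism to show that two distinct ideals in $\Prim_{\widetilde{\Orb}\cap S}(\Walg_\lambda)$ cannot have the same image under $\bullet^\dagger$.

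The paper's argument supplies exactly this missing mechanism. One applies the quantum slice construction of Subsection~\ref{SUBSECTION_quantum_slice} a second time, now to $\Walg_{\lambda}$ itself at a point $\tilde{\chi}\in\widetilde{\Orb}\cap S$, and proves (by a transitivity-of-slices argument) that the resulting algebra is the completion of the W-algebra $\widetilde{\Walg}_\lambda$ attached to $\tilde{\chi}$. This yields a map from $\Prim_{\widetilde{\Orb}\cap S}(\Walg_\lambda)$ into the finite-codimension primitives of $\widetilde{\Walg}_\lambda$, and hence the inequality $|\Prim_{\widetilde{\Orb}\cap S}(\Walg_\lambda)|\leqslant |\Prim_{\tilde{\chi}}(\widetilde{\Walg}_\lambda)|$. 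A subtle point here is that the two natural derivations on $\widetilde{\Walg}_{\lambda\hbar}^{\wedge_{\tilde{\chi}}}$ (one from the Kazhdan action for $\tilde{\chi}$, one inherited from the slice construction) need not agree, and one must show that finite-corank maximal $\hbar$-saturated ideals are stable under \emph{any} such derivation; this is handled via the Amitsur--Levitzki theorem. Combining the inequality with Proposition~\ref{Prop:map_summary}(2),(3) then forces $\bullet^\dagger$ to be a bijection on each stratum. Your component-group observation is, in effect, already absorbed into Proposition~\ref{Prop:map_summary}(3); what remains is the genuinely new transitivity step.
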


This theorem provides a classification of primitive ideals in $\Walg$ because the set of primitive
ideals  $\J\subset \U$ with  $\VA(\U/\J)=\overline{\Orb}$ is known thanks to the work of Joseph,
\cite{Joseph}.

{\bf Acknowledgements}. I would like to thank Jon Brundan for communicating this problem
to me.

\section{W-algebras and the map between ideals}\label{SECTION_W}
\subsection{Quantum slice}\label{SUBSECTION_quantum_slice}
Let $Y$ be an affine Poisson scheme equipped with a $\K^\times$-action such that
the Poisson bracket has degree $-2$. Let $\A_\hbar$ be an associative flat graded $\K[\hbar]$-algebra
(where $\hbar$ has degree $1$)  such that $[\A_\hbar,\A_\hbar]\subset \hbar^2 \A_\hbar$ and $\K[Y]=\A_\hbar/(\hbar)$ as a graded Poisson
algebra. Pick a point $\chi\in Y$. Let $I_\chi$ be the maximal ideal of $\chi$ in $\K[Y]$ and let
$\widetilde{I}_\chi$ be the inverse image of $I_\chi$ in $\A_{\hbar}$. Consider the completion
$\A_\hbar^{\wedge_\chi}:=\varprojlim_{n\rightarrow \infty} \A_\hbar/\widetilde{I}_\chi^n$.
This is a complete topological $\K[[\hbar]]$-algebra with $\A_\hbar^{\wedge_\chi}/(\hbar)=\K[Y]^{\wedge_\chi}$,
where on the right hand side we have the usual commutative completion. Moreover,
as we have seen in \cite{ES_appendix}, Lemma A2\footnote{In \cite{ES_appendix} we had an assumption that $Y$ has only finitely
many leaves but this assumption is not necessary for many constructions and results.}, the algebra $\A_\hbar^{\wedge_\chi}$ is
flat over $\K[[\hbar]]$.

The cotangent space $T^*_\chi Y=I_\chi/I_\chi^2$ comes equipped with a natural skew-symmetric
form, say $\omega$. Fix a maximal symplectic subspace $V\subset T^*_\chi Y$. One can choose
an embedding $V\hookrightarrow \widetilde{I}_\chi^{\wedge_\chi}$ such that $[\iota(u),\iota(v)]=\hbar^2 \omega(u,v)$
and whose composition with the projection $\widetilde{I}_\chi^{\wedge_\chi}\twoheadrightarrow T_\chi^*Y$
is the identity. This is proved similarly to Proposition 3.3 in \cite{Kaledin} (or can be deduced from that proposition,
compare with the argument of Subsection 7.2 in \cite{Miura}). 
Consider the homogenized Weyl algebra $\W_\hbar(V)=T(V)[\hbar]/(u\otimes v-v\otimes u-\hbar^2 \omega(u,v))$
and its completion $\W_\hbar^{\wedge_0}(V)$ at zero. %The embedding $\iota:V\hookrightarrow \A^{\wedge_\chi}_\hbar$
It is easy to show that $$\A^{\wedge_\chi}_\hbar:=\W_\hbar^{\wedge_0}(V)\widehat{\otimes}_{\K[[\hbar]]}\underline{\A}'_\hbar,$$  
where $\underline{\A}'_\hbar$ is the centralizer of $V$  in $\A^{\wedge_\chi}_\hbar$. We remark that the algebra $\underline{\A}'_\hbar$ is complete with respect to the topology induced by its maximal ideal. The symbol ``$\widehat{\otimes}$'' stands for the completed tensor product
of topological vector spaces.

The argument in the proof of \cite{qiso}, Proposition 6.6.1, Step 2,  shows that any two embeddings  $\iota^1,\iota^2:V\hookrightarrow \widetilde{I}_\chi^{\wedge_\chi}$ satisfying the conditions in the previous paragraph differ by an automorphism of $\A_\hbar^{\wedge_\chi}$ of the form $\exp(\frac{1}{\hbar^2}\ad(z))$ with $z\in (\widetilde{I}_\chi^{\wedge_\chi})^3$. In particular, the algebra
$\underline{\A}'_\hbar$ is defined uniquely up to a $\K[[\hbar]]$-linear isomorphism.

Now let us consider a compatibility of our construction with certain derivations.
Suppose $\A_\hbar$ is equipped with a derivation $D$ such that $D\hbar=\hbar$.
The derivation extends to $\A_\hbar^{\wedge_\chi}$. According to \cite{ES_appendix},
there is a derivation $D'$ of $\underline{\A}'_\hbar$ with the following properties.
First, $D'\hbar=\hbar$. Second, we have $D-\tilde{D}'=\frac{1}{\hbar^2}\ad(a)$ for some element
$a\in \A_\hbar^{\wedge_\chi}$, where $\tilde{D}'$ means the derivation that equals $D'$ on $\underline{\A}'_\hbar$
and acts by $1$ on the symplectic space $V$ generating $\W_\hbar^{\wedge_0}(V)$.

An easy special case of the previous construction is when the derivation $D$
comes from a $\K^\times$-action on $\A_\hbar$ preserving $\chi$. Here we can choose a
$\K^\times$-stable $V$ and a  $\K^\times$-equivariant $\iota:V\rightarrow \widetilde{I}_\chi^{\wedge_\chi}$
and so we get a $\K^\times$-action on $\underline{\A}'_\hbar$. Moreover, the
algebra $\underline{\A}'_\hbar$ is now defined uniquely up to a $\K^\times$-equivariant
$\K[[\hbar]]$-linear isomorphism.

Now let $Z(\A_\hbar)$ denote the center of $\A_\hbar$. Consider a $\K^\times$-equivariant
$\K[[\hbar]]$-linear homomorphism $\lambda:Z(\A_\hbar)\rightarrow \K[\hbar]$ and the corresponding central reduction
$\A_{\lambda,\hbar}:=\A_\hbar/ \A_\hbar \ker\lambda$. Until the end of the subsection
we assume that $\chi$ lies in the spectrum of $\A_{\lambda,\hbar}/(\hbar)$.

Consider the induced homomorphism
$Z(\A_\hbar)\rightarrow \A^{\wedge_\chi}_\hbar=\W^{\wedge_0}_\hbar(V)\widehat{\otimes}_{\K[[\hbar]]}\underline{\A}'_\hbar$.
The image is central. The center of $\W^{\wedge_0}_\hbar(V)$ coincides with $\K[[\hbar]]$,
so the image of $Z(\A_\hbar)$ is contained in $\underline{\A}'_\hbar$. Set
$\underline{\A}'_{\lambda,\hbar}=\underline{\A}'_\hbar/\underline{\A}'_\hbar\ker\lambda$.

Then we have the completion $\A^{\wedge_\chi}_{\lambda,\hbar}$ of $\A_{\lambda,\hbar}$, and
$\A^{\wedge_\chi}_{\lambda,\hbar}=\A^{\wedge_\chi}_{\hbar}/\A^{\wedge_\chi}_{\hbar}\ker\lambda$.
Furthermore, we have the following commutative diagram, where the horizontal arrows are isomorphisms
and the vertical arrows are the natural quotients.

%\begin{equation}\label{eq:commut_diagr}
\begin{picture}(80,30)
\put(3,22){$\A^{\wedge_\chi}_{\hbar}$}
\put(42,22){$\W^{\wedge_0}_\hbar(V)\widehat{\otimes}_{\K[[\hbar]]}\underline{\A}'_{\hbar}$}
\put(2,2){$\A^{\wedge_\chi}_{\lambda,\hbar}$}
\put(41,2){$\W^{\wedge_0}_\hbar(V)\widehat{\otimes}_{\K[[\hbar]]}\underline{\A}'_{\lambda,\hbar}$}
\put(5,20){\vector(0,-1){13}}
\put(56,20){\vector(0,-1){13}}
\put(12,23.5){\vector(1,0){28}}
\put(12,3.5){\vector(1,0){28}}
\end{picture}
%\end{equation}

Now suppose that we have a reductive group $Q$ that acts on $\A_\hbar$ rationally by $\K[\hbar]$-algebra automorphism
fixing $\chi$. Further, suppose that there is quantum moment map $\Phi^{\A}: \q\rightarrow \A_\hbar$, i.e.,
a $Q$-equivariant linear map with the property that $[\Phi^{\A}(\xi),a]=\hbar^2 \xi.a$, where on the right hand side
$\xi$ is the derivation of $\A_\hbar$ coming from the $Q$-action. Composing $\Phi^{\A}$ with a natural homomorphism
$\A_\hbar\rightarrow \A^{\wedge_\chi}_\hbar$ we get a quantum moment map $\q\rightarrow \A_\hbar^{\wedge_\chi}$
again denoted by $\Phi^{\A}$.

We remark that we can choose $V$ to be $Q$-stable. This gives rise to a $Q$-action on $V$
by linear symplectomorphisms and hence to an action of $Q$ on $\W_\hbar(V)$ by $\K[\hbar]$-linear
algebra automorphisms. There is a quantum moment map $\Phi^{\W}:\q\rightarrow \W_\hbar(V)$ that
is the composition $\q\rightarrow \mathfrak{sp}(V)=S^2V\hookrightarrow \W_\hbar(V)$.

Further, since $Q$ is reductive, we can assume that the embedding $\iota:V\hookrightarrow \A^{\wedge_\chi}_\hbar$
is $Q$-equivariant. So we get a $Q$-action on $\underline{\A}'_\hbar$. Let us produce a quantum
moment map for this action. For $\xi\in \q$ set $\underline{\Phi}(\xi)=\Phi^{\A}(\xi)-\Phi^{\W}(\xi)$.
The quantum moment map conditions for $\Phi^{\A},\Phi^{\W}$ imply that $\underline{\Phi}(\xi)$
commutes with $\W_\hbar(V)^{\wedge_0}$ and hence the image of $\underline{\Phi}$ is in $\underline{\A}'_\hbar$.
Now it is clear that $\underline{\Phi}:\q\rightarrow \underline{\A}'_\hbar$  is a quantum moment
map for the action of $Q$ on $\underline{\A}'_\hbar$.

We remark that the $Q$-action and the map $\underline{\Phi}$ are defined by the previous construction
uniquely up to an isomorphism of the form $\exp(\frac{1}{\hbar^2}\ad(z))$, where, in addition to conditions
mentioned above, $z$ is $Q$-invariant. Also we remark that if we have a $\K^\times$-action on $\A_\hbar$
as above but additionally commuting with $Q$ and such that $\Phi^{\A}(\xi)$ has degree $2$ for each $\xi$,
then the $Q$-action on $\underline{\A}'_\hbar$ also may be assumed to commute with $\K^\times$
and $\underline{\Phi}(\xi)$ may be assumed to have degree $2$.

%The Poisson formal scheme $Y^{\wedge_\chi}$ decomposes into the product $Z\times \underline{Y}'$,
%where $Z$ is a symplectic polydisk, and $\underline{X}'$ is a Poisson formal scheme
%such that the maximal ideal of the corresponding algebra of functions is Poisson.

%There is a
%corresponding decomposition $\A_\hbar^{\wedge_\chi}\cong \W_\hbar^{\wedge_0}\widehat{\otimes}_{\K[[\hbar]]}\underline{\A}_\hbar'$, where %$\W_\hbar^{\wedge_0}$
%stands for the formal completed Weyl algebra (so that $\W_\hbar^{\wedge_0}/(\hbar)$ is the Poisson algebra
%of functions on $Z$), and $\underline{\A}'_\hbar$ is a subalgebra in $\A_\hbar$ such that
%$\underline{\A}'_\hbar/(\hbar)$ is the algebra of functions on $\underline{X}'$. Moreover, $\underline{\A}'_\hbar$
%comes with equipped with some derivation $D'$ satisfying $D'\hbar=\hbar$.
%Note that there is a derivation $D$ on $\A^{\wedge_\chi}_\hbar$ with this property, it is induced from the grading on $\A_\hbar$. An important %property of $D'$ is that $D-\tilde{D}'=\frac{1}{\hbar^2}\ad(a)$ for some element
%$a\in \A_\hbar^{\wedge_\chi}$, where $\tilde{D}'$ means the derivation that equals $D'$ on $\underline{\A}'_\hbar$
%and acts by $1$ on the symplectic space generating $\W_\hbar^{\wedge_0}$.

%Applying the argument of the proof of \cite{quant_iso}, Proposition 6.6.1, Step 2, we see that $\underline{\A}'_\hbar$
%is well-defined up to an isomorphism.
\subsection{W-algebras via quantum slices}\label{SUBSECTION_W_slice}
We are going to consider a special case of the construction explained in the previous subsection.

Let $G$ be a simply connected semisimple algebraic group and let $\g$ be the Lie algebra of
$G$. Set $Y:=\g^*$. We remark that we can identify $\g$ with $\g^*$ by means
of the Killing form. Pick a nilpotent orbit $\Orb\subset\g$ and an element $e\in \Orb$.
We take $e$ for $\chi$. Let us equip $Y$ with a {\it Kazhdan} $\K^\times$-action defined as follows.
Pick an $\sl_2$-triple $(e,h,f)$, where $h$ is semisimple. Let $\gamma:\K^\times\rightarrow G$
be the one-parameter corresponding to $h$. We define a $\K^\times$-action on $\g^*$
by $t.\alpha=t^{-2}\gamma(t) \alpha, \alpha\in\g^*, t\in \K^\times$. We remark that $t.\chi=\chi$.

For $\A_\hbar$ we take the homogenized version $\U_\hbar$ of the universal enveloping algebra
defined by $\U_\hbar:=T(\g)[\hbar]/(\chi\otimes y-y\otimes \chi-\hbar^2 [\chi,y])$. We can  extend the Kazhdan
action to $\U_\hbar$. Explicitly, for $\xi\in \g$ with $[h,\xi]=i\xi$ we have $ t.\xi=t^{i+2}\xi$
and we set $t.\hbar:=t\hbar$.

Apply the construction of the previous subsection to $\A_\hbar,\chi$. We get the $\K[[\hbar]]$-algebra
$\underline{\A}'_\hbar$ acted on by $\K^\times$ together with a $\K^\times$-equivariant isomorphism
$\A^{\wedge_\chi}_\hbar=\W^{\wedge_0}_\hbar(V)\widehat{\otimes}_{\K[[\hbar]]}\underline{\A}'_\hbar$.
Here $V$ has the same meaning as before but we can describe it explicitly: namely, for $V$ we take
the subspace $[\g,f]\subset \g=T_\chi^*Y$.

Let $\underline{\A}_\hbar$ denote the subalgebra of all $\K^\times$-finite vectors in $\underline{\A}'_\hbar$.
It turns out that the algebra $\underline{\A}_\hbar/\hbar \underline{\A}_\hbar$, or, more precisely,
the corresponding variety is well-known in the theory of nilpotent orbits -- this is a so called
Slodowy slice. In more detail, set $S:=e+\z_\g(f)$ and view $S$ as a subvariety in $\g^*$ via
the identification $\g\cong \g^*$. Then $S$ is $\K^\times$-stable and is transverse to $Ge$ in $e$:
$\g=T_e S\oplus T_e Ge$. Moreover, the Kazhdan action contracts $S$ to $e$, meaning that $\lim_{t\rightarrow \infty}t.s=e$
for all $s\in S$. This implies that $S$ is transversal to any $G$-orbit it intersects -- $\g=T_s S+ T_s Gs$
for all $s\in S$. Also the contraction property means that the grading on $\K[S]$ induced
by the $\K^\times$-action is positive: there are no negative degrees, and the only elements
in degree 0 are constants.

The contraction property for the $\K^\times$-action on $S$ implies that the subalgebra of $\K[S]^{\wedge_\chi}=
\underline{\A}'_\hbar/\hbar \underline{\A}'_\hbar$ consisting of the $\K^\times$-finite elements
coincides with $\K[S]$. Moreover, since the degree of $\hbar$ is positive,
this implies that $\underline{\A}_\hbar/ \hbar\underline{\A}_\hbar=\K[S]$.

We set $\underline{\A}:=\underline{\A}_\hbar/(\hbar-1)\underline{\A}_\hbar$.
This is a filtered associative algebra whose associated graded is $\K[S]$.
The algebra $\underline{\A}_\hbar$ can be recovered as the Rees algebra
of $\underline{\A}$, while $\underline{\A}'_\hbar$ is the completion
$\underline{\A}_\hbar^{\wedge_\chi}$.

Let us remark that the algebra $\underline{\A}$ comes equipped with a homomorphism
$\Centr\rightarrow \underline{\A}$, where $\Centr$ is the center of the universal
enveloping algebra $\U(=\U_\hbar/(\hbar-1)\U_\hbar)$ of $\g$. Indeed,
set $\Centr_\hbar:=\U_\hbar^G$. This is the center of $\U_\hbar$. Consider $\Centr_\hbar$
as a subalgebra of $\A^{\wedge_\chi}_\hbar$. According to the previous
subsection,
we have $\Centr_\hbar\subset \underline{\A}'_\hbar$. It is easy to see that $\Centr_\hbar$ consists
of $\K^\times$-finite vectors so $\Centr_\hbar\subset \underline{\A}_\hbar$. This gives rise
to an embedding $\Centr\hookrightarrow \underline{\A}$. We remark that this embedding does not
depend on the choice (that of the embedding $V\hookrightarrow \tilde{I}^{\wedge_\chi}_\chi$)
we have made. This is because that the two choices are conjugate by an automorphism that commutes
with all elements of the center.

Consider the subgroup $Q:=Z_G(e,h,f)\subset G$. This group acts on $\U_\hbar$ and stabilizes $\chi$
(and $S$ as well). The $Q$-action commutes with $\K^\times$ and there is a quantum moment map
$\q\rightarrow \A_\hbar$ whose image consists of functions of degree $2$ with respect to the $\K^\times$-action.
So we get a $Q$-action on $\underline{\A}_\hbar$ as well as a quantum comoment map $\q\rightarrow \underline{\A}_\hbar$.
Both the action and the quantum moment map descend to $\underline{\A}$.

\subsection{Equivalence with a previous definition}\label{SUBSECTION_def_equiv}
Recall that the cotangent bundle  $T^*G$ carries a natural symplectic form $\omega$.
This form is invariant with respect to the natural $G\times G$-action. Moreover, for the
$\K^\times$-action by fiberwise dilations we have  $t.\omega= t^{-1}\omega, t\in \K^\times$.

We remark that we can trivialize $T^*G$ by using left-invariant 1-forms  hence $T^*G=G\times \g^*$.
Consider $S$ as a subvariety in $\g^*$ and set $X:=G\times S$. It turns out that the subvariety
$X\subset T^*G$ is symplectic. It is easy to see that $X$ is stable with respect to the left $G$-action,  as well as
to the Kazhdan  $\K^\times$-action on $T^*G$ given by $t.(g,\alpha)=(g\gamma(t)^{-1}, t^{-2}\gamma(t)\alpha)$.
Clearly, $\omega$ has degree $2$ with respect to the Kazhdan action. Also $Q$ acts on $T^*G$ by $q.(g,\alpha)=(gq^{-1}, q\alpha)$
and $X$ is $Q$-stable.

We remark that $\mu_G:T^*G\rightarrow \g^*, \mu_G(g,\alpha)=g\alpha$ is a moment map, i.e., a $G$-equivariant
map such that for any $\xi\in\g$ the derivation $\{\mu_G^*(\xi),\cdot\}$ of $\K[T^*G]$ coincides with the derivation produced
by $\xi$ via the $G$-action. Similarly, $\mu_Q: T^*G\rightarrow \q^*, \mu_Q(x,\alpha)=\alpha|_{\q}$.

In \cite{Wquant} the author proved that there is an associative product $*$ on $\K[X][\hbar]$, where $\hbar$
is an independent variable, satisfying the following properties:
\begin{enumerate}
\item $*$ is $G\times \K^\times$-equivariant, where $G\times\K^\times$ acts on $\K[X]$ as usual, $g.\hbar=\hbar, t.\hbar=t\hbar$.
\item For $f,g\in \K[X]$ we have $f*g=\sum_{i=0}^\infty D_i(f,g)\hbar^{2i}$, where $D_i$ is a bi-differential
operator of order at most $i$.
\item $f*g\equiv fg\mod \hbar^2$.
\item  $f*g-g*f\equiv \hbar^2\{f,g\}\mod\hbar^4$.
\item The map $\mu^*_G:\g\rightarrow \K[X][\hbar], \mu^*_Q: \q\rightarrow \K[X][\hbar]$ are quantum moment maps.
\end{enumerate}
The last property was established in \cite{HC}.  By definition, the W-algebra $\Walg$ is the quotient of the invariant subalgebra $\K[X][\hbar]^G$ by $\hbar-1$. The quantum moment map $\mu^*: \g\rightarrow \K[X][\hbar]$ gives rise
to the homomorphism $\Centr=U(\g)^G\rightarrow \Walg$.

\begin{Prop}\label{Prop:def_equiv}
We have a filtration preserving $Q$-equivariant isomorphism $\Walg\rightarrow \underline{\A}$
intertwining the embeddings of $\Centr$ and the quantum moment maps from $\q$.
\end{Prop}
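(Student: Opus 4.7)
The plan is to match the two constructions via a common embedding into $\U_\hbar^{\wedge_\chi}$. I work at the Rees level, setting $\Walg_\hbar := \K[X][\hbar]^G$; the proposition will follow from a $\K^\times$- and $Q$-equivariant isomorphism of filtered $\K[\hbar]$-algebras $\Walg_\hbar \xrightarrow{\sim} \underline{\A}_\hbar$ intertwining the embeddings of $\Centr_\hbar$ and the quantum moment maps from $\q$, and then reducing modulo $(\hbar - 1)$.

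Both algebras quantize $\K[S]$: for $\Walg_\hbar$ because $X = G \times S$ and the left $G$-action on the first factor gives $\K[X]^G = \K[S]$ directly; for $\underline{\A}_\hbar$ by the Kazhdan contraction argument recorded in Subsection \ref{SUBSECTION_W_slice}. Subsection \ref{SUBSECTION_quantum_slice} provides a $\K^\times$- and $Q$-equivariant decomposition $\U_\hbar^{\wedge_\chi} = \W_\hbar^{\wedge_0}(V) \widehat{\otimes}_{\K[[\hbar]]} \underline{\A}'_\hbar$ with $V = [\g, f] = T_\chi^*(Ge)$, together with a slice quantum moment map $\underline{\Phi}(\xi) = \Phi^{\A}(\xi) - \Phi^{\W}(\xi)$. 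I would derive a parallel decomposition from the $\Walg$-side: using the quantum comoment map $\mu_G^*: \U_\hbar \to \K[X][\hbar]$ of property (5) and completing $\K[X][\hbar]$ transversally along $\mu_G^{-1}(\chi) \cap X = Z_G(e) \times \{\chi\}$ produces a $\K^\times$- and $Q$-equivariant identification $\U_\hbar^{\wedge_\chi} \cong \W_\hbar^{\wedge_0}(V) \widehat{\otimes}_{\K[[\hbar]]} \Walg_\hbar^{\wedge_\chi}$, where the first tensor factor now encodes the $G$-directions swept out inside $X = G \times S$. By the uniqueness of such tensor factorizations up to the inner automorphisms $\exp(\frac{1}{\hbar^2}\ad z)$ recorded in Subsection \ref{SUBSECTION_quantum_slice}, the two slice factors $\underline{\A}'_\hbar$ and $\Walg_\hbar^{\wedge_\chi}$ are $Q$-equivariantly isomorphic as complete $\K[[\hbar]]$-algebras. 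Restricting to $\K^\times$-finite parts yields the desired $\Walg_\hbar \xrightarrow{\sim} \underline{\A}_\hbar$, using that the Kazhdan grading on $\K[S]$ is positive.

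The main obstacle is checking that the abstract isomorphism between the two slice factors actually intertwines the quantum moment maps from $\q$ and the embeddings of $\Centr_\hbar$. For the $\q$-moment maps, both sides are by construction of the form $\Phi^{\A}(\xi) - \Phi^{\W}(\xi)$ inside $\U_\hbar^{\wedge_\chi}$, and the two agree after composition with the ambiguity $\exp(\frac{1}{\hbar^2}\ad z)$; since $z$ can be chosen $Q$-invariant, this inner automorphism commutes with the image of $\q$ and preserves the moment map. For the embeddings of $\Centr_\hbar = Z(\U_\hbar)$, both factor through the natural inclusion $\Centr_\hbar \hookrightarrow \U_\hbar^{\wedge_\chi}$ followed by projection onto the slice factor (and the subsequent passage to $\K^\times$-finite vectors); since the inner automorphism fixes $\Centr_\hbar$ pointwise, the two embeddings coincide under the identification. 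With these compatibilities verified, the filtration preservation is automatic from the $\K^\times$-equivariance, and reducing modulo $(\hbar-1)$ delivers the proposition.
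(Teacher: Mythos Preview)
Your approach is essentially the same as the paper's: produce a second decomposition $\U_\hbar^{\wedge_\chi}\cong \W_\hbar^{\wedge_0}(V)\widehat{\otimes}_{\K[[\hbar]]}\Walg_\hbar^{\wedge_\chi}$, compare it with the one from Subsection~\ref{SUBSECTION_quantum_slice}, use uniqueness of the centralizer factor up to $\exp(\frac{1}{\hbar^2}\ad z)$, and pass to $\K^\times$-finite parts. Your handling of the $\Centr_\hbar$- and $\q$-compatibilities is also what the paper does.

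The one place where your write-up is noticeably looser than the paper is the derivation of the second decomposition. The paper does not complete $\K[X][\hbar]$ along the fiber $Z_G(e)\times\{\chi\}$; instead it completes the quantized $\K[T^*G][[\hbar]]$ along the full $G$-orbit $Gx$ with $x=(1,\chi)$, invokes Theorem~2.3.1 of \cite{HC} to obtain a $G\times\K^\times$-equivariant splitting $\K[T^*G][[\hbar]]^{\wedge_{Gx}}\cong \W_\hbar^{\wedge_0}(V)\widehat{\otimes}_{\K[[\hbar]]}\K[X][[\hbar]]^{\wedge_{Gx}}$ intertwining both quantum moment maps, and only then takes $G$-invariants to get $\U_\hbar^{\wedge_\chi}$ on the left and $\W_\hbar^{\wedge_0}(V)\widehat{\otimes}\Walg_\hbar^{\wedge_\chi}$ on the right. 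Your formulation (``completing $\K[X][\hbar]$ transversally along $\mu_G^{-1}(\chi)\cap X$ produces an identification of $\U_\hbar^{\wedge_\chi}$'') conflates the roles of $\U_\hbar$ and $\K[X][\hbar]$ and does not by itself explain why a tensor factorization of $\U_\hbar^{\wedge_\chi}$ results; the substantive input here is precisely the cited theorem from \cite{HC}, which you should invoke rather than leave implicit.
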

\begin{proof}
Similarly to the above we have a star-product on $T^*G$ having the properties analogous
to (1)-(5). Set $x:=(1,\chi)\in X\subset T^*G$. Consider the completions $\K[T^*G][[\hbar]]^{\wedge_{Gx}},\K[X][[\hbar]]^{\wedge_{Gx}}$
of the corresponding algebras (w.r.t. star-products) at the ideals of $Gx$. According to Theorem 2.3.1 from \cite{HC},
we have a $G\times \K^\times$-equivariant (where we consider the Kazhdan $\K^\times$-actions)
topological $\K[[\hbar]]$-algebra isomorphism
$$\K[T^*G][[\hbar]]^{\wedge_{Gx}}\xrightarrow{\sim} \W^{\wedge_0}_\hbar(V)\widehat{\otimes}_{\K[[\hbar]]}\K[X][[\hbar]]^{\wedge_{Gx}},$$
and this isomorphism intertwines the quantum moment maps for the $G$-action and $Q$-action (the Weyl algebra component of
the quantum moment map for $G$ on the right hand side is 0). The algebra of $G$-invariants of the left hand side is $\U^{\wedge_\chi}_\hbar$,
while on the right hand side we get $\W^{\wedge_0}_\hbar(V)\widehat{\otimes}_{\K[[\hbar]]}\Walg^{\wedge_\chi}_\hbar$,
see {\it loc. cit.} So we can take $\Walg^{\wedge_\chi}_\hbar$ for $\underline{\A}'_\hbar$.
It follows that we have $Q$-equivariant isomorphisms $\Walg_\hbar\cong\underline{\A}_\hbar$ of graded $\K[\hbar]$-algebras
and $\Walg\cong \underline{\A}$ of filtered algebras. Both isomorphism intertwine the quantum comoment maps from $\q$.
Moreover, the embedding
$\U_\hbar^G\hookrightarrow \U_\hbar=\K[T^*G][[\hbar]]^G$ induced by the moment map is  just the inclusion.
This completes the proof of the proposition.
\end{proof}

\subsection{Map between the set of ideals}\label{SUBSECTION_ideals}
Let us construct the map $\bullet^\dagger$ mentioned in Theorem \ref{Thm:main}.
We will start with the general setting explained in Subsection \ref{SUBSECTION_quantum_slice}.

Consider the set $\Id_\hbar(\A_\hbar)$
of all $\K^\times$-stable $\hbar$-saturated ideals $\J_\hbar\subset \A_\hbar$, where ``$\hbar$-saturated''
means that $\A_\hbar/\J_\hbar$ is flat over $\K[\hbar]$. Similarly, consider the set $\Id_{\hbar}(\underline{\A}_\hbar')$ of all $D'$-stable $\hbar$-saturated ideals in $\underline{\A}_\hbar'$.
The discussion of  $D'$ in Subsection \ref{SUBSECTION_quantum_slice} implies that an $\hbar$-saturated
$\I'_\hbar\subset \underline{\A}'_\hbar$ is $D'$-stable if and only if $\W_\hbar^{\wedge_0}\widehat{\otimes}_{\K[[\hbar]]}\underline{\I}'_\hbar\subset \A_\hbar^{\wedge_\chi}$
is $D$-stable. In particular, the set $\Id_{\hbar}(\underline{\A}_\hbar')$ does not depend
on the choice of $D'$.

We have maps between $\Id_\hbar(\A_\hbar)$  and $\Id_\hbar(\underline{\A}'_\hbar)$
constructed as follows. Take an ideal $\J_\hbar\subset \A_\hbar$ and form its closure $\J_\hbar^{\wedge_\chi}
\subset \A_\hbar^{\wedge_\chi}$. This ideal is $D$-stable but also one can check that it is actually
$\hbar$-saturated. As such, the ideal $\J_\hbar^{\wedge_\chi}$ has the form $\W_\hbar^{\wedge_0}\widehat{\otimes}_{\K[[\hbar]]}\I'_\hbar$ for a unique two-sided ideal
$\I'_\hbar$ in $\underline{\A}'_\hbar$. The ideal $\I'_\hbar$ is automatically $D'$-stable
and $\hbar$-saturated. We consider the map $\bullet_\dagger:\Id_\hbar(\A_\hbar)\rightarrow
\Id_\hbar(\underline{\A}_\hbar')$ sending $\J_\hbar$ to $\I'_\hbar$.

Let us produce a map
in the opposite direction. Take $\I'_\hbar\in \Id_\hbar(\underline{\A}'_\hbar)$. Then $\J_\hbar:=\A_\hbar\cap
\W^{\wedge_0}_\hbar\widehat{\otimes}_{\K[[\hbar]]}\I'_\hbar$ is a $\K^\times$-stable $\hbar$-saturated
ideal in $\A_\hbar$. Consider the map $\bullet^\dagger:\Id_\hbar(\underline{\A}'_\hbar)\rightarrow \Id_{\hbar}(\A_\hbar)$ sending $\I_\hbar'$ to $\J_\hbar$.

Now suppose that the grading on $\K[Y]$ induced by the $\K^\times$-action is positive. Then $\Id_\hbar(\A_\hbar)$
is in bijection with the set $\Id(\A)$ of two-sided ideals in $\A:=\A_\hbar/(\hbar-1)$. Under this bijection,
the ideal in $\A$ corresponding to $\J_\hbar\in \Id_\hbar(\A_\hbar)$ is $\J_\hbar/(\hbar-1)\J_\hbar$.

Similarly, suppose that $D'$ is also induced from some $\K^\times$-action such that $\underline{\A}_\hbar'$ is the projective limit of some positively graded algebras (this is the case in the situation considered in Subsection \ref{SUBSECTION_W_slice}).
%Let $\underline{\A}_\hbar$ denote the subalgebra
%of $\K^\times$-finite elements in $\underline{\A}_\hbar'$ and set $\underline{\A}:=\underline{\A}_\hbar/(\hbar-1)$.
Then we have natural identifications $\Id_\hbar(\underline{\A}_\hbar')\cong \Id_\hbar(\underline{\A}_\hbar)\cong \Id(\underline{\A})$. So we have maps between $\Id(\A),\Id(\underline{\A})$ that still will be denoted
by $\bullet_\dagger,\bullet^\dagger$.

In a special case we have some additional information
about the maps $\bullet_\dagger,\bullet^\dagger$. Suppose that $Y$ is still equipped with
a contracting $\K^\times$-action and, moreover, has only finitely many symplectic leaves.
For a symplectic leaf $\Leaf$  let $\Id_{\Leaf}(\A_\hbar)$ denote the
subset of $\Id(\A)$ consisting of all ideals $\J$ such that $\gr(\A/\J)$
is supported on the closure of $\Leaf$. The maximal elements in $\Id_\Leaf(\A)$ are precisely prime (=primitive
by \cite{ES_appendix}) ideals.

Now let $\Leaf$ be the leaf containing $\chi$.  Then $\bullet_\dagger$
defines a map  $\Id_\Leaf(\A)\rightarrow \Id_{\hbar,fin}(\underline{\A}_\hbar')$, where, by definition,
the target set  consists of
all ideals $\I'_\hbar$ such that $\underline{\A}'_\hbar/\I'_\hbar$ is free of finite
rank over $\K[[\hbar]]$.  For a prime ideal $\J\in \Id_\Leaf(\A)$ and any minimal prime
ideal $\I'_\hbar$ of $\J_\dagger$ we have $\J=(\I'_\hbar)^\dagger$, see \cite{ES_appendix}, Lemma A4.
Corollary 3.17 from \cite{ES} implies that there are finitely many prime
ideals in $\Id_{\hbar,fin}(\underline{\A}'_\hbar)$ and so $\Id_\Leaf(\A)$ also contains
 finitely many prime ideals.

We are interested in the special case when $\A$ is a central reduction of $\U$
at some central character $\lambda$. Consider a unique $\K^\times$-equivariant $\K[\hbar]$-linear
homomorphism $Z(\U_\hbar)\rightarrow \K[\hbar]$ specializing to $\lambda$
at $\hbar=1$. This homomorphism will also be denoted by $\lambda$.
So $\A_\hbar=\U_{\lambda,\hbar}$ is the Rees algebra of $\U_\lambda$.
The underlying variety $Y$ is the nilpotent cone $\mathcal{N}$ of $\g$ and so contains
finitely many symplectic leaves (=nilpotent orbits). Using the usual
$\K^\times$-action we get an identification $\Id(\U_\lambda)\cong \Id_\hbar(\U_{\lambda,\hbar})$.
Similarly, using the Kazhdan action we get an identification $\Id(\Walg_\lambda)\cong \Id_\hbar(\Walg_{\lambda,\hbar})$.
We remark that the Kazhdan action differs from the usual one by inner automorphisms
and so an $\hbar$-saturated ideal in $\U_{\lambda,\hbar}$ is stable under the usual $\K^\times$-action
if and only if it is stable under the Kazhdan $\K^\times$-action. Since the
Kazhdan action is contracting, we get $\Id_\hbar(\Walg_{\lambda,\hbar})\cong \Id_\hbar(\Walg_{\lambda,\hbar}^{\wedge_\chi})$.

So we get maps $\bullet_\dagger: \Id(\U_\lambda)\rightarrow \Id(\Walg_\lambda),
\bullet^\dagger: \Id(\Walg_\lambda)\rightarrow \Id(\U_\lambda)$ that first appeared
in \cite{Wquant}. Their properties are summarized in the following proposition.

\begin{Prop}\label{Prop:map_summary}
Let $\g\cong \sl_n$.
\begin{enumerate}
\item The sets $\Prim(\Walg_\lambda),\Prim(\U_\lambda)$ are finite.
\item The map $\bullet^\dagger:\Prim(\Walg_\lambda)\rightarrow \Prim(\U_\lambda)$
is surjective. More precisely, for $\J\in \Prim_{\Orb'}(\U_\lambda)$ there is
$\I\in \Prim_{\Leaf'}(\Walg_\lambda)$ with $\I^\dagger=\J$. Here $\Leaf'$
is an irreducible component of $S\cap \Orb'$ (in fact, below we will see
that $S\cap\Orb'$ is irreducible).
\item The restriction of $\bullet^\dagger$ to $\Prim_{\chi}(\Walg_\lambda)$ is a bijection
$\Prim_\chi(\Walg_\lambda)\xrightarrow{\sim}\Prim_{\Orb}(\U_\lambda)$.
\end{enumerate}
\end{Prop}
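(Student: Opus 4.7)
The plan is to combine classical input on primitive ideals in $\U_\lambda$ with the leaf stratification of $\Prim(\Walg_\lambda)$ set up at the end of Subsection~\ref{SUBSECTION_ideals}, and then to exploit the distinguishing feature of type $A$: connectedness of centralizers.

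For (1), finiteness of $\Prim(\U_\lambda)$ is classical (Duflo's theorem: every primitive ideal is the annihilator of a simple highest weight module, and for a fixed central character only finitely many such annihilators occur). For $\Prim(\Walg_\lambda)$, use the decomposition $\Prim(\Walg_\lambda)=\bigsqcup_{\Leaf'}\Prim_{\Leaf'}(\Walg_\lambda)$ indexed by the finitely many symplectic leaves $\Leaf'$ of $S$, namely the irreducible components of $S\cap\Orb'$ for the finitely many orbits $\Orb'$ with $\Orb\subset\overline{\Orb'}$. As explained at the end of Subsection~\ref{SUBSECTION_ideals}, each $\Prim_{\Leaf'}(\Walg_\lambda)$ is finite: the map $\bullet_\dagger$ embeds it into the set of prime ideals of a finite-rank quotient of an inner slice algebra, which is finite by Corollary 3.17 of \cite{ES}.

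Part (3) is the substantive statement and is where type $A$ enters. By \cite{Wquant}, Theorem 1.2.2, the restriction of $\bullet^\dagger$ is a surjection $\Prim_\chi(\Walg_\lambda)\twoheadrightarrow\Prim_\Orb(\U_\lambda)$ whose fibres are exactly the orbits of the component group $C:=Q/Q^\circ$ acting on $\Prim_\chi(\Walg_\lambda)$. Hence it suffices to show $C=1$ for $\g=\sl_n$. If the Jordan type of $e$ has $m_i$ blocks of size $i$, then Schur's lemma applied to the $\sl_2$-decomposition of $\K^n$ gives $Z_{\GL_n}(e,h,f)=\prod_i\GL_{m_i}$; intersecting with $\SL_n$ imposes a single character equation and therefore leaves a connected subgroup. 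Thus $Q$ is connected, $C=1$, and (3) follows.

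For (2), given $\J\in\Prim_{\Orb'}(\U_\lambda)$ and a component $\Leaf'$ of $S\cap\Orb'$, I reduce to (3) at a deeper point. Pick $e'\in\Leaf'$ and apply the quantum slice construction of Subsection~\ref{SUBSECTION_quantum_slice} a second time, now to $\underline{\A}_\hbar$ at $e'$. By transitivity of slicing, the resulting inner slice algebra is identified — compatibly with the Kazhdan $\K^\times$-action, with the relevant $Q'$-action, and with the embedding of $\Centr$ — with the homogenized W-algebra $\Walg^{e'}_\hbar$ attached to $e'\in\g$. This yields a further pair of maps $\bullet_\dagger,\bullet^\dagger$ between $\Id(\Walg_\lambda)$ and $\Id(\Walg^{e'}_\lambda)$ whose composition with the outer maps reproduces the original $\bullet_\dagger,\bullet^\dagger$ between $\Id(\U_\lambda)$ and $\Id(\Walg^{e'}_\lambda)$. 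Now apply (3) to $\Walg^{e'}_\lambda$ (still type $A$) to produce $\I_0\in\Prim_{e'}(\Walg^{e'}_\lambda)$ with $\I_0^\dagger=\J$; pulling $\I_0$ back through the inner $\bullet^\dagger$ gives $\I\in\Prim_{\Leaf'}(\Walg_\lambda)$ with $\I^\dagger=\J$.

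The main obstacle is the transitivity identification used in (2): one must check carefully that iterating the quantum slice construction — first at $e\in\g$, then at $e'\in S$ — reproduces the one-step slice at $e'\in\g$ together with all of its extra structure ($\K^\times$-action, quantum moment map for the centralizer at $e'$, and central embedding), so that the two copies of $\bullet^\dagger$ genuinely compose to the expected map. Modulo this, everything else is formal from \cite{Wquant}.
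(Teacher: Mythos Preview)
Your treatment of (1) and (3) matches the paper's in substance. For (1) the paper invokes the main result of \cite{ES_appendix} directly (using that $\gr\Walg_\lambda=\K[S\cap\mathcal{N}]$ has finitely many symplectic leaves), which amounts to the same leaf-by-leaf finiteness you extract from the end of Subsection~\ref{SUBSECTION_ideals}. For (3) the key point --- triviality of the component group in type $A$ --- is exactly what the paper uses; note however that the paper attributes the statement that the fibres of $\bullet^\dagger$ on $\Prim_\chi(\Walg_\lambda)$ are precisely the $C$-orbits to \cite{HC}, Conjecture~1.2.1 (proved there), not to \cite{Wquant}. This matters: with $C=1$, you need that fibres are single $C$-orbits, not merely that the map is $C$-equivariant and surjective.

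The real divergence is (2). The paper's proof is a one-line citation: \cite{Wquant}, Theorem~1.2.2(vii) already produces, for $\J\in\Prim_{\Orb'}(\U_\lambda)$, a preimage $\I\in\Prim(\Walg_\lambda)$, and Ginzburg's irreducibility theorem \cite{Ginzburg_irr} for associated varieties of primitive ideals then forces $\VA(\Walg_\lambda/\I)$ to be the closure of a single leaf $\Leaf'\subset S\cap\Orb'$. Your route instead reduces (2) to (3) at a deeper point via an iterated-slice transitivity argument. This is not wrong in spirit --- it is precisely the mechanism the paper deploys later, in Section~3, to prove Theorem~\ref{Thm:main} (the Lemma there identifies the inner slice algebra with $\widetilde{\Walg}_{\lambda\hbar}^{\wedge_{\tilde{\chi}}}$). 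But for (2) it is heavy machinery for a result already available in the literature, and the gap you yourself flag is real: the paper does \emph{not} show that the inner slice inherits the Kazhdan $\K^\times$-action on the nose, only a derivation $D'$ with $D'\hbar=\hbar$, and making the two $\bullet^\dagger$ maps compose as you want requires the further argument in Section~3.2 that stability of finite-corank maximal ideals is independent of the choice of such a derivation. So your (2) is an unnecessary detour through the heart of the main theorem.
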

\begin{proof}
The first claim for $\U_\lambda$ is well known. For $\Walg_\lambda$ it follows from
the main result of \cite{ES_appendix}. Indeed, the associated graded algebra $\gr\Walg_\lambda$
is nothing else but $\K[S\cap\mathcal{N}]$, where $\mathcal{N}$ is the nilpotent cone
in $\g$. The Poisson variety $S\cap\mathcal{N}$ has finitely many symplectic leaves
(according to Proposition \ref{Prop:leaves} below -- that is independent of the present proposition-- these leaves are in bijection with
nilpotent orbits $\Orb_1\subset \g$ with $\Orb\subset \overline{\Orb}_1$). So the main theorem
of \cite{ES_appendix} does apply to $\Walg_\lambda$.

Assertion (2) follows from \cite{Wquant}, Theorem 1.2.2(vii). In fact, the associated
variety of any primitive ideal in $\Walg_\lambda$ is irreducible thanks to a theorem
of Ginzburg, \cite{Ginzburg_irr}.

Finally (3) follows from
\cite{HC}, Conjecture 1.2.1 (proved in that paper) because the action of the component
group on $\Prim_{\chi}(\Walg_\lambda)$ is trivial.
\end{proof}

We remark that $\Prim(\U)=\bigsqcup_\lambda \Prim(\U_\lambda)$, where the union is taken over all
central characters. A similar claim holds for $\Walg$. The map $\bullet^\dagger:\Prim(\Walg)\rightarrow \Prim(\U)$
is obtained from  the maps $\bullet^\dagger: \Prim(\Walg_\lambda)\rightarrow \Prim(\U_\lambda)$.

\section{Proof of the main theorem}
\subsection{Classical level}
Here we are going to prove a ``quasi-classical'' analog of Theorem \ref{Thm:main} that seems
to be of some independent interest. Instead of primitive ideals in associative algebras
we will consider symplectic leaves of the corresponding Poisson varieties.

From the description of the Poisson structure on $S$ given in \cite{GG},  symplectic
leaves are irreducible components of $S\cap \Orb_1$, where $\Orb_1$ are (co)adjoint
orbits in $\g\cong \g^*$.

The main result of this section characterizes nilpotent symplectic leaves of $S$.

\begin{Prop}\label{Prop:leaves}
Suppose $\g=\sl_n$.
Let $\Orb_1$ be a nilpotent orbit with $\Orb\subset \overline{\Orb}_1$. Then the intersection
$\Orb_1\cap S$ is irreducible.
\end{Prop}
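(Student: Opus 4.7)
The strategy is to show that the closure $S\cap \overline{\Orb}_1$ is connected and normal, hence irreducible, and then deduce the irreducibility of the open dense stratum $\Orb_1\cap S$.

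For connectedness, note that $\overline{\Orb}_1$ is a $G$-stable closed cone in $\g\cong \g^*$, so it is preserved by the Kazhdan $\K^\times$-action $t.\alpha=t^{-2}\gamma(t)\alpha$. Therefore $S\cap \overline{\Orb}_1$ is Kazhdan-stable, and since this action contracts $S$ to its unique fixed point $e\in \Orb\subset \overline{\Orb}_1$, it contracts $S\cap \overline{\Orb}_1$ to $\{e\}$. A variety contracted to a single point by a $\K^\times$-action is connected.

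For normality, I would use the action map $\pi\colon G\times S\to \g^*$, $(g,s)\mapsto g.s$. By the transversality $\g=T_s S + T_s(G.s)$ which holds for every $s\in S$, the differential of $\pi$ is surjective everywhere, so $\pi$ is smooth. Since $\overline{\Orb}_1$ is $G$-stable, $\pi^{-1}(\overline{\Orb}_1)=G\times(S\cap \overline{\Orb}_1)$, and the restricted morphism onto $\overline{\Orb}_1$ is still smooth. By the Kraft--Procesi theorem, which is the point where the hypothesis $\g=\sl_n$ enters essentially, $\overline{\Orb}_1$ is normal. A smooth morphism lifts normality to the source, so $G\times(S\cap \overline{\Orb}_1)$ is normal; since $G$ is smooth and irreducible, this forces $S\cap \overline{\Orb}_1$ to be normal. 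A connected normal Noetherian scheme is irreducible (its irreducible components coincide with its connected components), so $S\cap \overline{\Orb}_1$ is irreducible.

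Finally, the transversality of $S$ to $G$-orbits also yields $\dim(S\cap \Orb')=\dim \Orb'-\dim \Orb$ for every orbit $\Orb'\subset \overline{\Orb}_1$ meeting $S$, so each stratum with $\Orb'\subsetneq \Orb_1$ has strictly smaller dimension than $\Orb_1\cap S$. Thus $\Orb_1\cap S$ is open and dense in the irreducible variety $S\cap \overline{\Orb}_1$ and is therefore irreducible. The essential obstacle is the normality step: outside type $A$, $\overline{\Orb}_1$ can fail to be normal and indeed the proposition itself can fail; the contraction argument and the transfer of normality across a smooth morphism are, by contrast, type-independent.
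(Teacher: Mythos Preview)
Your proof is correct and follows the same overall strategy as the paper: show that $S\cap\overline{\Orb}_1$ is connected (via the contracting Kazhdan $\K^\times$-action) and normal (ultimately via Kraft--Procesi), hence irreducible, and then pass to the open subset $\Orb_1\cap S$. The one substantive difference lies in how normality is transferred from $\overline{\Orb}_1$ to $S\cap\overline{\Orb}_1$. The paper argues locally: it reduces, via the contraction, to checking normality of the completion $(S\cap\overline{\Orb}_1)^{\wedge_e}$, and then uses the transversality at $e$ to obtain a product decomposition $\overline{\Orb}_1^{\wedge_e}\cong\Orb^{\wedge_e}\times(S\cap\overline{\Orb}_1)^{\wedge_e}$, so that normality of the left factor (from Kraft--Procesi) forces normality of the right. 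You instead argue globally, using that the action map $G\times S\to\g^*$ is smooth (by transversality at every point of $S$), so its base change to $\overline{\Orb}_1$ is smooth and normality pulls back. Both routes are standard; yours is perhaps more self-contained from the algebro-geometric side, while the paper's local/completion argument dovetails with the formal-neighborhood techniques used elsewhere in the paper.
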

\begin{proof}
We will check that the intersection $S\cap \overline{\Orb}_1$ is normal. Since $S\cap \overline{\Orb}_1$
is $\K^\times$-stable and hence connected, the normality implies that $S\cap\overline{\Orb}_1$
is irreducible. Being an open subvariety in $S\cap \overline{\Orb}_1$, the variety $S\cap \Orb_1$
is also irreducible.

Again, thanks to the contracting $\K^\times$-action it is enough to show that the completion
$(S\cap \overline{\Orb}_1)^{\wedge_e}$ at the point $e$ is normal. Recall that the intersection
$S\cap \Orb$ is transversal at $e$. This implies that $\overline{\Orb}_1^{\wedge_e}$ decomposes
into the direct product $\Orb^{\wedge_e}\times (S\cap \overline{\Orb}_1)^{\wedge_e}$. According to
Kraft and Procesi, \cite{KP}, the variety $\overline{\Orb}_1$ is normal. Hence the formal
scheme $\overline{\Orb}_1^{\wedge_e}$ is normal as well. The direct product decomposition
now implies that  $(S\cap \overline{\Orb}_1)^{\wedge_e}$ is normal.
\end{proof}

\begin{Rem}\label{Rem:all_leaves}
In fact, the techniques used below to prove Theorem \ref{Thm:main} allow one
to show that $S\cap \Orb_1$ is irreducible for any (not necessarily nilpotent) orbit $\Orb_1$.
\end{Rem}

\subsection{Quantum level}
Pick some nilpotent orbit $\widetilde{\Orb}\subset \g$ whose closure contains $\Orb$,
let $\tilde{\chi}\in \widetilde{\Orb}\cap S$, and
let $\widetilde{\Walg}$ be the corresponding W-algebra. We claim that the inequalities
\begin{equation}\label{eq:inequal}|\Prim_{\widetilde{\Orb}\cap S}(\Walg_\lambda)|
\leqslant |\Prim_{\tilde{\chi}}(\widetilde{\Walg}_{\lambda})|\end{equation}
(for all possible $\widetilde{\Orb}$) imply that the map $\bullet^\dagger$ sends
$\Prim_{\widetilde{\Orb}\cap S}(\Walg_\lambda)$ to $\Prim_{\widetilde{\Orb}}(\U_\lambda)$ and is a bijection between the two
sets.

Indeed, thanks to Proposition \ref{Prop:map_summary}(3), the map $\bullet^{\dagger}:\Prim_{\tilde{\chi}}(\widetilde{\Walg}_{\lambda})\rightarrow
\Prim_{\widetilde{\Orb}}(\U_\lambda)$ is a bijection. On the other hand,
the preimage of $\J\in \Prim_{\widetilde{\Orb}}(\U_\lambda)$ in $\Prim(\Walg_\lambda)$
contains at least one element from $\Prim_{\widetilde{\Orb}\cap S}(\Walg_\lambda)$, assertion (2).
Finally, by assertion (1), both sets $\Prim(\Walg_\lambda),\Prim(\U_\lambda)$ are finite.
So  inequalities (\ref{eq:inequal}) imply the claim of the previous paragraph.

To prove the inequality $|\Prim_{\widetilde{\Orb}\cap S}(\Walg_\lambda)|
\leqslant |\Prim_{\tilde{\chi}}(\widetilde{\Walg}_{\lambda})|$ we will apply the general construction
of Subsection \ref{SUBSECTION_quantum_slice} to $Y=S\cap\mathcal{N},\A=\Walg_\lambda$ and the point $\tilde{\chi}\in S\cap \widetilde{\Orb}$.

\begin{Lem} $\underline{\A}'_\hbar\cong \widetilde{\Walg}_{\lambda\hbar}^{\wedge_{\tilde{\chi}}}$, where $\widetilde{\Walg}_{\lambda\hbar}$
is the homogenized version (=the Rees algebra) of the central reduction $\widetilde{\Walg}_{\lambda}$ of $\widetilde{\Walg}$.
\end{Lem}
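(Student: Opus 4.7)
The plan is to deduce the isomorphism by comparing two different symplectic decompositions of the completion $\U_{\lambda,\hbar}^{\wedge_{\widetilde{\Orb}}}$. One decomposition comes from direct Slodowy slicing of $\U_{\lambda,\hbar}$ at $\tilde{\chi}$ and features $\widetilde{\Walg}_{\lambda\hbar}^{\wedge_{\tilde{\chi}}}$; the other comes from iterating the slice construction --- first at $\chi$ (yielding $\Walg_{\lambda,\hbar}$), then at $\tilde{\chi}$ (yielding $\underline{\A}'_\hbar$). The uniqueness of the symplectic-transverse decomposition in the quantum slice construction then forces $\underline{\A}'_\hbar \cong \widetilde{\Walg}_{\lambda\hbar}^{\wedge_{\tilde{\chi}}}$.

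For the direct decomposition, pick an $\sl_2$-triple $(\tilde{e},\tilde{h},\tilde{f})$ through $\tilde{\chi}$, the Slodowy slice $\widetilde{S}=\tilde{\chi}+\z_\g(\tilde{f})$, and the transverse symplectic subspace $\tilde V=[\g,\tilde{f}]$ of dimension $\dim\widetilde{\Orb}$. Applying \cite{HC}, Theorem~2.3.1, at $\tilde{x}=(1,\tilde{\chi})$ with respect to the Kazhdan action attached to $\tilde h$, and taking $G$-invariants after specializing the central character to $\lambda$, yields
\begin{equation*}
\U_{\lambda,\hbar}^{\wedge_{\widetilde{\Orb}}}\cong \W^{\wedge_0}_\hbar(\tilde V)\widehat\otimes_{\K[[\hbar]]} \widetilde{\Walg}_{\lambda\hbar}^{\wedge_{\tilde{\chi}}}.
\end{equation*}
For the iterated decomposition, note that the transverse direction to $X=G\times S$ in $T^*G$ at every point of $X$ is the constant subspace $V=[\g,f]$ of dimension $\dim\Orb$ (since $S$ has tangent $\z_\g(f)$ at every point). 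Hence the same theorem applied at $\chi$ but completed at $G\tilde{x}\subset X$, after taking $G$-invariants, gives
\begin{equation*}
\U_{\lambda,\hbar}^{\wedge_{\widetilde{\Orb}}}\cong \W^{\wedge_0}_\hbar(V)\widehat\otimes_{\K[[\hbar]]} \Walg_{\lambda,\hbar}^{\wedge_{\tilde{\chi}}}.
\end{equation*}

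Apply now the quantum slice construction of Subsection~\ref{SUBSECTION_quantum_slice} to $\A_\hbar=\Walg_{\lambda,\hbar}$, $Y=S\cap\mathcal{N}$ at $\tilde{\chi}$, using a derivation $D$ with $D\hbar=\hbar$ preserving the ideal of $\tilde{\chi}$ (inherited for example from $\ad\tilde{h}$ via the embedding $\Walg_{\lambda,\hbar}^{\wedge_{\tilde\chi}}\hookrightarrow \U_{\lambda,\hbar}^{\wedge_{\widetilde\Orb}}$ provided by the previous step). This produces a maximal symplectic $V'\subset T_{\tilde{\chi}}^*(S\cap\mathcal{N})$ of dimension $\dim(\widetilde{\Orb}\cap S)=\dim\widetilde{\Orb}-\dim\Orb$ together with
\begin{equation*}
\Walg_{\lambda,\hbar}^{\wedge_{\tilde{\chi}}}\cong \W^{\wedge_0}_\hbar(V')\widehat\otimes_{\K[[\hbar]]} \underline{\A}'_\hbar.
\end{equation*}
Substituting and comparing the two decompositions of $\U_{\lambda,\hbar}^{\wedge_{\widetilde{\Orb}}}$ yields
\begin{equation*}
\W^{\wedge_0}_\hbar(V\oplus V')\widehat\otimes \underline{\A}'_\hbar \cong \W^{\wedge_0}_\hbar(\tilde V)\widehat\otimes \widetilde{\Walg}_{\lambda\hbar}^{\wedge_{\tilde{\chi}}}.
\end{equation*}
The dimensional identity $\dim V+\dim V'=\dim\Orb+(\dim\widetilde{\Orb}-\dim\Orb)=\dim\tilde V$ makes the two Weyl-algebra factors symplectically isomorphic, and the uniqueness of the centralizer in the quantum slice construction (final paragraph of Subsection~\ref{SUBSECTION_quantum_slice}) forces $\underline{\A}'_\hbar \cong \widetilde{\Walg}_{\lambda\hbar}^{\wedge_{\tilde{\chi}}}$.

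The main technical obstacle is justifying the Harish-Chandra decomposition at the orbit $G\tilde{x}$ rather than at $Gx$: one has to verify that Theorem~2.3.1 of \cite{HC} extends uniformly over points of $X$ (which is plausible because the transverse $V$ is constant along $X$), and that taking $G$-invariants of the completed algebras produces exactly the claimed completions of $\U_{\lambda,\hbar}$ and $\Walg_{\lambda,\hbar}$. A secondary but routine matter is the choice of derivation needed to run the quantum slice construction at a point $\tilde{\chi}$ not fixed by the original Kazhdan $\K^\times$-action on $\Walg_{\lambda,\hbar}$; this is handled by the derivation formalism already present in Subsection~\ref{SUBSECTION_quantum_slice}.
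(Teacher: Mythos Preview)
Your proposal is correct and follows essentially the same strategy as the paper: both arguments compare the decomposition $\U_{\lambda\hbar}^{\wedge_{\tilde\chi}}\cong \W_\hbar^{\wedge_0}(\tilde V)\widehat\otimes\widetilde{\Walg}_{\lambda\hbar}^{\wedge_{\tilde\chi}}$ coming from slicing directly at $\tilde\chi$ with the two-step decomposition obtained by first passing to $\Walg_{\lambda\hbar}^{\wedge_{\tilde\chi}}$ and then applying the quantum slice construction there, and then invoke the uniqueness of the centralizer factor. The paper's $V_1,V_2$ are precisely your $V',V$, and the technical obstacle you flag---running the argument of Proposition~\ref{Prop:def_equiv} at $(1,\tilde\chi)\in G\times S$ rather than at $(1,\chi)$---is exactly how the paper obtains the intermediate isomorphism $\U_{\lambda\hbar}^{\wedge_{\tilde\chi}}\cong \W_\hbar(V_2)^{\wedge_0}\widehat\otimes\Walg_{\lambda\hbar}^{\wedge_{\tilde\chi}}$.
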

\begin{proof}
Set $V_1:=T_{\tilde{\chi}}(S\cap \widetilde{\Orb})$ and let $V_2$ be the skew-orthogonal complement of $V_1$
in $T_{\tilde{\chi}}\widetilde{\Orb}$. We can form the corresponding completed homogenized Weyl algebras
$\W_\hbar(V_1)^{\wedge_0},\W_\hbar(V_2)^{\wedge_0}$. Then we have
\begin{align}\label{eq:decomp_11}
&\U_{\lambda\hbar}^{\wedge_{\tilde{\chi}}}=\W_{\hbar}(V_1\oplus V_2)^{\wedge_0}\widehat{\otimes}_{\K[[\hbar]]}\widetilde{\Walg}_{\lambda\hbar}^{\wedge_{\tilde{\chi}}},\\\label{eq:decomp12}
&\Walg_{\lambda\hbar}^{\wedge_{\tilde{\chi}}}=\W_\hbar(V_1)^{\wedge_0}\widehat{\otimes}_{\K[[\hbar]]}\underline{\A}'_\hbar.
\end{align}

Applying the construction used in the proof of Proposition \ref{Prop:def_equiv}
to the point $(1,\tilde{\chi})\in G\times S\subset T^*G$ we see that $\U_\hbar^{\wedge_{\tilde{\chi}}}\cong\W_\hbar(V_2)^{\wedge_0}\widehat{\otimes}_{\K[[\hbar]]}\Walg_\hbar^{\wedge_{\tilde{\chi}}}$.
From here and the description of the embedding $\Centr\rightarrow \Walg$ provided in Subsection \ref{SUBSECTION_def_equiv}
one can see that
\begin{equation}\label{eq:decomp_13}
\U_{\lambda\hbar}^{\wedge_{\tilde{\chi}}}\cong \W_\hbar(V_2)^{\wedge_0}\widehat{\otimes}_{\K[[\hbar]]} \widetilde{\Walg}_{\lambda\hbar}^{\wedge_{\tilde{\chi}}}.
\end{equation}
Combining (\ref{eq:decomp12}) and (\ref{eq:decomp_13}), we get
\begin{equation}\label{eq:decomp14}
\U_{\lambda\hbar}^{\wedge_{\tilde{\chi}}}\cong \W_\hbar(V_1\oplus V_2)^{\wedge_0}\widehat{\otimes}_{\K[[\hbar]]}\underline{\A}'_\hbar.
\end{equation}
From Subsection \ref{SUBSECTION_quantum_slice}, we see that
there is a $\K[[\hbar]]$-linear isomorphism $\widetilde{\Walg}_{\lambda\hbar}^{\wedge_{\tilde{\chi}}}\cong \underline{\A}'_\hbar$.
\end{proof}

Now we have two derivations of $\widetilde{\Walg}_{\lambda\hbar}$, the derivation $\tilde{D}$ induced by the Kazhdan action
defined for the nilpotent element $\tilde{\chi}$, and the derivation $D'$ coming from an isomorphism
$\widetilde{\Walg}_{\lambda\hbar}^{\wedge_{\tilde{\chi}}}\cong \underline{\A}'_\hbar$. Both satisfy $\tilde{D}\hbar=D'\hbar=\hbar$.
Consider the sets $\widetilde{\Prim}_{fin,\hbar}(\widetilde{\Walg}_{\lambda\hbar}^{\wedge_{\tilde{\chi}}}),
\Prim'_{fin,\hbar}(\widetilde{\Walg}_{\lambda\hbar}^{\wedge_{\tilde{\chi}}})$
that consist of all prime (=maximal) $\hbar$-saturated ideals $\I'_\hbar\subset \widetilde{\Walg}_{\lambda\hbar}^{\wedge_{\tilde{\chi}}}$
such that $\widetilde{\Walg}_{\lambda\hbar}^{\wedge_{\tilde{\chi}}}/\I'_\hbar$ is of finite rank over $\K[[\hbar]]$
and such that $\I'_\hbar$ is, respectively, $\tilde{D}$- and $D'$-stable. The set $\widetilde{\Prim}_{fin,\hbar}(\Walg_{1\lambda\hbar}^{\wedge_{\tilde{\chi}}})$ is in natural bijection with $\Prim_{fin}(\widetilde{\Walg}_{\lambda})$. On the other hand, by the results recalled in Subsection \ref{SUBSECTION_ideals},
the cardinality of $\Prim'_{fin,\hbar}(\widetilde{\Walg}_{\lambda\hbar}^{\wedge_{\tilde{\chi}}})$ is bigger than or equal
to that of $\Prim_{S\cap \widetilde{\Orb}}(\Walg)$. So it remains to show that the two sets coincide.

It is enough to check that any derivation $d$ of $\underline{\A}'_\hbar=\widetilde{\Walg}_{\lambda\hbar}^{\wedge_{\tilde{\chi}}}$
with $d(\hbar)=\hbar$ fixes any maximal $\hbar$-saturated ideal of finite corank.  Consider the quotient
$(\underline{\A}_\hbar')^{(n)}$ of $\underline{\A}'_\hbar$ by the ideal generated by the elements
$s_{2n}(x_1,\ldots,x_{2n})=\sum_{\sigma\in \mathfrak{S}_{2n}}\operatorname{sgn}(\sigma)x_{\sigma(1)}\ldots x_{\sigma(2n)},  x_1,\ldots,x_{2n}\in \underline{\A}_\hbar'$. This ideal is clearly $d$-stable.
Also consider the analogous quotient $\underline{\A}_\hbar^{(n)}$ of $\underline{\A}_\hbar:=\widetilde{\Walg}_{\lambda\hbar}$.
It follows from Section 7.2 of \cite{Miura} that $\underline{\A}_\hbar^{(n)}$ has finite rank over $\K[[\hbar]]$.
But $(\underline{\A}'_\hbar)^{(n)}$ is the completion of $\underline{\A}_\hbar^{(n)}$ at $\tilde{\chi}$.
So $(\underline{\A}'_\hbar)^{(n)}$ has finite rank over $\K[[\hbar]]$. Therefore the localization
$(\underline{\A}'_\hbar)^{(n)}[\hbar^{-1}]$ is a finite dimensional $\K[\hbar^{-1},\hbar]]$-algebra.

Maximal $\hbar$-saturated ideals of finite corank in $\A'_\hbar$ are in a natural one-to-one correspondence
with maximal ideals of finite codimension in the $\K[\hbar^{-1},\hbar]]$-algebra $\A'_\hbar[\hbar^{-1}]$.
Clearly $\A'_\hbar[\hbar^{-1}]^{(n)}=(\underline{\A}'_\hbar)^{(n)}[\hbar^{-1}]$. Thanks to the Amitsur-Levitzki
theorem, every maximal ideal of finite codimension in $\A'_\hbar[\hbar^{-1}]$ is the preimage of an ideal
in  $\A'_\hbar[\hbar^{-1}]^{(n)}$ for some $n$. Of course, $d$ induces a $\K[\hbar^{-1},\hbar]]$-linear derivation
of $\A'_\hbar[\hbar^{-1}]^{(n)}$. Now it remains to use a  fact that a maximal ideal in a finite dimensional
algebra is stable under any derivation of this algebra. For reader's convenience we will provide a proof
here.

Let $A$ be a finite dimensional algebra over some field $K$ and let $\m$ be its maximal ideal.
Replacing $A$ with $A/\bigcap_{i=1}^\infty \m^i$, we may assume that $\m$ is a nilpotent ideal
and hence the radical of $A$. To complete the proof apply Lemma 3.3.3 from \cite{Dixmier}.

\end{document}